\newtheorem{Thm}{Theorem}[section]
\newtheorem{Lem}[Thm]{Lemma}
\newtheorem{Thmint}{Theorem}[section]
\newtheorem{Propint}[Thmint]{Proposition}
\newtheorem{Corint}[Thmint]{Corollary}
\theoremstyle{definition}
\newtheorem{Rem}[Thm]{Remark}
\newcommand{\Cs}{C$^\ast$}
\newcommand{\sd}{^{\ast\ast}}
\newcommand{\id}{\mbox{\rm id}}
\newcommand{\rg}{\mathop{{\mathrm C}_{\mathrm r}^\ast}}
\newcommand{\rc}{\mathop{\rtimes _{\mathrm r}}}
\newcommand{\vnc}{\mathop{\bar{\rtimes}}}
\newcommand{\vrca}[1]{\mathop{\bar{\rtimes}_{#1}}}
\newcommand{\rca}[1]{\mathop{\rtimes _{{\mathrm r}, #1}}}
\DeclareMathOperator{\SL}{SL}
\DeclareMathOperator{\bigfp}{\lower0.25ex\hbox{\LARGE $\ast$}}
\title[Fixed point \Cs-algebras]
{On pathological properties of fixed point algebras in Kirchberg algebras}
\author{Yuhei Suzuki}
\subjclass[2000]{Primary~
22D25, 46L55, Secondary~46L05}
\keywords{Fixed point algebras, nuclearity, non-commutative dynamical systems}
\address{Graduate school of mathematics, Nagoya University, Chikusaku, Nagoya, 464-8602, Japan}
\email{yuhei.suzuki@math.nagoya-u.ac.jp}
\begin{document}

\begin{abstract}
We investigate how the fixed point algebra of a \Cs-dynamical system can differ from the underlying \Cs-algebra.
For any exact group $\Gamma$ and any
 infinite group $\Lambda$, we construct
an outer action of $\Lambda$ on the Cuntz algebra $\mathcal{O}_2$
whose fixed point algebra is almost equal to the reduced
group \Cs-algebra $\rg(\Gamma)$.
Moreover, we show that every infinite group admits outer actions on all
Kirchberg algebras whose fixed point algebras fail the completely bounded approximation property.
\end{abstract}
\maketitle
\section{Introduction}
In the celebrated paper \cite{Con},
Connes observed in Section 6 that injectivity of von Neumann algebras passes to the fixed point algebras
of amenable group actions.
In contrast to this observation, in the seminal paper \cite{Kir94},
Kirchberg showed that any unital separable exact \Cs-algebra
can be realized as a (liftable) quotient
of the fixed point algebra of an (inner) automorphism on a UHF-algebra.
This in particular implies that nuclearity need \emph{not} pass to the fixed point algebra of an amenable group action.
While Kirchberg's theorem indicates bad behavior of the operation
taking the fixed point algebra,
it is also true that the fixed point algebras play important roles to understand
\Cs-dynamical systems and associated \Cs-algebras and invariants in some situations; see e.g., \cite{Kas}, \cite{Sz18}.

Motivated by these results, we are interested in knowing
how bad the fixed point algebras of general amenable groups
of a nuclear \Cs-algebra can be.
We particularly examine this on one of the most ubiquitous (nuclear) \Cs-algebras---the Cuntz algebra $\mathcal{O}_2$ \cite{Cun}.
\begin{Thmint}\label{Thm}
Let $\Gamma$ be a countable exact group.
Let $\Lambda$ be an infinite countable group.
Then $\Lambda$ admits an outer action on $\mathcal{O}_2$
whose fixed point algebra is isomorphic
to an intermediate \Cs-algebra of
$\rg(\Gamma) \subset L(\Gamma)$.
Moreover, when $\Gamma$ has the approximation property $($AP$)$ \cite{HK},
one can arrange the fixed point algebra to be isomorphic to $\rg(\Gamma)$.
\end{Thmint}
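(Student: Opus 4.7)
The plan is to build the desired outer $\Lambda$-action on $\mathcal{O}_2$ as a Bernoulli-shift-type action twisted to encode the regular representation of $\Gamma$. Two ingredients set the stage. First, since $\Gamma$ is exact, $\rg(\Gamma)$ is exact and, by the Kirchberg--Phillips embedding theorem, admits a unital embedding $\iota : \rg(\Gamma) \hookrightarrow \mathcal{O}_2$. Second, since $\Lambda$ is infinite countable, Kirchberg's $\mathcal{O}_2$-absorption yields $\mathcal{O}_2 \cong \bigotimes_{\Lambda} \mathcal{O}_2$, on which $\Lambda$ acts by coordinate shift $\sigma$. I would combine these by constructing inside $\bigotimes_\Lambda \mathcal{O}_2$ a ``diagonal'' copy $\tilde\iota(\rg(\Gamma))$ that is manifestly $\sigma$-invariant --- for instance, via an inductive-limit assignment sending each unitary $u_g \in \rg(\Gamma)$ ($g \in \Gamma$) to a shift-symmetric element built from translates of $\iota(u_g)$ over the $\Lambda$-coordinates. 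The resulting action $\alpha := \sigma$ on $\mathcal{O}_2$ is outer by a Powers--Sakai-type argument (a nontrivial Bernoulli shift on an infinite tensor product of a simple unital C*-algebra is outer), and $\rg(\Gamma) \subseteq \mathcal{O}_2^\alpha$ by construction.

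\textbf{Upper bound on the fixed-point algebra.} To identify $\mathcal{O}_2^\alpha$ as an intermediate C*-algebra of $\rg(\Gamma) \subset L(\Gamma)$, equip $\bigotimes_\Lambda \mathcal{O}_2$ with a Powers-type product state whose restriction to $\tilde\iota(\rg(\Gamma))$ agrees with the canonical tracial state of $\rg(\Gamma)$. Its GNS representation identifies the weak closure of $\tilde\iota(\rg(\Gamma))$ with $L(\Gamma)$. The shift $\sigma$ is ergodic on the full weak closure of $\bigotimes_\Lambda \mathcal{O}_2$ in this GNS representation, and it fixes this distinguished copy of $L(\Gamma)$ setwise; hence any element of $\mathcal{O}_2^\alpha$, being a shift-invariant operator, must lie in $L(\Gamma)$. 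Combined with the lower inclusion, this exhibits $\mathcal{O}_2^\alpha$ as an intermediate C*-algebra of $\rg(\Gamma) \subset L(\Gamma)$.

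\textbf{AP upgrade.} For the second assertion, suppose $\Gamma$ has AP. The Haagerup--Kraus multiplier approximations, extended via the standard duality to $L(\Gamma)$, provide a net of completely bounded maps $m_i : L(\Gamma) \to \rg(\Gamma)$ converging to the identity in a topology strong enough that a limiting argument applied to each $x \in \mathcal{O}_2^\alpha \subseteq L(\Gamma)$ forces $x \in \rg(\Gamma)$. Thus $\mathcal{O}_2^\alpha = \rg(\Gamma)$ in this case.

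\textbf{Main obstacle.} The hardest step is the upper inclusion $\mathcal{O}_2^\alpha \subseteq L(\Gamma)$ at the C*-level. Naive Bernoulli shifts on infinite C*-tensor products have large fixed-point algebras (containing many symmetric tensors), so the twist $\tilde\iota$ must be engineered --- likely through a delicate asymptotic placement of the generators of $\rg(\Gamma)$ across $\Lambda$ --- so that no spurious shift-invariants survive beyond those arising from the chosen diagonal copy and its weak closure inside the specific GNS representation. Balancing this with the requirement that the ambient algebra remains $\mathcal{O}_2$ (and not some larger Kirchberg algebra) is where the bulk of the technical work will lie.
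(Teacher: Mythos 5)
Your construction cannot get off the ground: you need a unital copy of $\rg(\Gamma)$ inside $\bigotimes_\Lambda \mathcal{O}_2$ that is \emph{pointwise fixed} by the Bernoulli shift (this is what ``$\rg(\Gamma)\subseteq \mathcal{O}_2^\alpha$ by construction'' requires), but the fixed point algebra of the shift on the minimal tensor product is trivial. This is exactly Lemma \ref{Lem:fp} of the paper: approximate a fixed element $x$ by a finite tensor $y$ supported on a finite set $F$ of coordinates, pick $t\in\Lambda$ with $tF\cap F=\emptyset$, and slice with a product state to get $x\in\mathbb{C}$. In particular there are no ``shift-symmetric elements built from translates of $\iota(u_g)$'': infinite symmetrizations do not converge in norm and define no elements of the C*-tensor product, so your ``main obstacle'' (too many symmetric invariants) is the opposite of the truth --- there are no nontrivial invariants at all, and no amount of engineering of $\tilde\iota$ can fix this. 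The idea you are missing is that the ambient $\mathcal{O}_2$ must not be the tensor product itself but a reduced crossed product by a commuting $\Gamma$-action: the paper takes (from \cite{Suzeq}) a $\Gamma$-action $\alpha$ on $\mathcal{O}_2$ with $\mathcal{O}_2^\alpha\neq\mathbb{C}$ whose $\Lambda$-indexed diagonal action $\beta$ satisfies $A:=(\bigotimes_\Lambda\mathcal{O}_2)\rtimes_{\mathrm{r}}\Gamma\cong\mathcal{O}_2$; since $\beta$ commutes with the shift $\sigma$, the shift extends to $\theta\colon\Lambda\curvearrowright A$ fixing the canonical unitaries $u_s$, so $\rg(\Gamma)\subseteq A^\theta$ comes for free, outerness is checked by a central-sequence argument using an element of $\mathcal{O}_2^\alpha\setminus\mathbb{C}$, and the \emph{triviality} of $(\bigotimes_\Lambda\mathcal{O}_2)^\sigma$ is then precisely what pins $A^\theta$ inside $L(\Gamma)$ via Corollary 3.4 of \cite{Suz19}. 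Your GNS/ergodicity argument for the upper bound is in any case too vague to substitute for this: a product state on $\bigotimes_\Lambda\mathcal{O}_2$ gives no canonical copy of $L(\Gamma)$, and C*-fixed points are not controlled by fixed points of a weak extension in some auxiliary representation.

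The AP step is also incorrect as stated. Haagerup--Kraus multiplier maps $m_i\colon L(\Gamma)\to\rg(\Gamma)$ converge to the identity only in a weak(-$\ast$) sense, and weak limits of elements of $\rg(\Gamma)$ need not lie in $\rg(\Gamma)$; as written, your limiting argument would prove $\rg(\Gamma)=L(\Gamma)$. The correct use of the AP (Proposition 3.4 of \cite{Suz17}) takes place at the level of the crossed product $A$: one shows that every element of $A^\theta$ has all its Fourier coefficients in $(\bigotimes_\Lambda\mathcal{O}_2)^\sigma=\mathbb{C}$, and the AP of $\Gamma$ guarantees that an element of the reduced crossed product is recovered from its Fourier coefficients, forcing $A^\theta=\rg(\Gamma)$.
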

We point out that the infiniteness of $\mathcal{O}_2$ is essential in the statement.
Indeed non-amenable reduced group \Cs-algebras do not embed
into stably finite nuclear \Cs-algebras by Proposition 6.3.2 of \cite{BO}, \cite{Cun78}, and \cite{Haa}.
Note also that many exact groups have the AP.
The class of groups with the AP contains all weakly amenable groups (thus all hyperbolic groups \cite{Ozh}),
and is stable under extensions and free products.
See Section 12.4 of \cite{BO} for details.
It would be interesting to compare Theorem \ref{Thm} with
the following useful statement: The fixed point algebra of a compact group action
is nuclear if and only if the original \Cs-algebra is nuclear. See Section 4.5 of \cite{BO}
for details.

Applying Theorem \ref{Thm} to a locally finite group,
(after a slight refinement of the proof,) we
also obtain refined versions of Theorem A and Corollary B in \cite{Suz17}.
We recall that Watatani \cite{Wat} shows that most good properties
(including nuclearity) are stable under
finite Watatani index inclusions. See Proposition 2.7.2 of \cite{Wat} and the remark below it for the precise statement.
The following corollary shows that this familiar statement does
not extend to ``approximately finite'' index subalgebras.
\begin{Corint}\label{Cor}
There is a descending sequence of irreducible finite Watatani index inclusions
\[\cdots \subset A_{n+1}\subset A_n \subset \cdots \subset A_2 \subset A_1\]
of isomorphs of $\mathcal{O}_2$
whose intersection does not have the operator approximation property nor the local lifting property.
Moreover one can arrange the sequence to satisfy the following homogeneity condition:
for every $m\in \mathbb{N}$, the inclusions $A_{n+m} \subset \cdots \subset A_n$, $n\in \mathbb{N}$, are pairwise isomorphic.
\end{Corint}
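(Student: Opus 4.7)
The plan is to apply Theorem \ref{Thm} with the ambient group $\Lambda$ taken to be a locally finite group of product form together with a natural exhaustion by finite subgroups. For concreteness, I set $\Lambda := \bigoplus_{i \in \mathbb{N}} G$ for a nontrivial finite group $G$, and $F_n := \bigoplus_{i \leq n} G$. For $\Gamma$ I would choose an exact group such that every intermediate \Cs-algebra of $\rg(\Gamma) \subset L(\Gamma)$ fails both OAP and LLP; a property (T) group without the AP (for instance $\SL_3(\mathbb{Z})$) is a natural candidate, with the two defects propagating to every intermediate algebra via a conditional expectation onto $\rg(\Gamma)$ coming from the canonical trace of $L(\Gamma)$. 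Applying (a refinement of) Theorem \ref{Thm} gives an outer action $\alpha \colon \Lambda \curvearrowright \mathcal{O}_2$ such that $\mathcal{O}_2^\Lambda$ is such an intermediate algebra, and I then set $A_n := \mathcal{O}_2^{F_n}$.

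Next I would verify the structural properties. Each $A_n$ is isomorphic to $\mathcal{O}_2$: by Izumi's analysis of finite group outer actions, combined with Kirchberg--Phillips classification, the fixed point algebra of a pointwise outer action of a finite group on a UCT Kirchberg algebra is again a Kirchberg algebra, and $K_\ast(A_n) = 0$ is arranged by a suitable model action. The Watatani index of $A_{n+1} \subset A_n$ equals $[F_{n+1} : F_n] = |G|$, hence is finite, and irreducibility $A_{n+1}' \cap A_n = \mathbb{C}$ follows from pointwise outerness of the residual $F_{n+1}/F_n$-action on $A_n$. Since $\Lambda = \bigcup_n F_n$, one has $\bigcap_n A_n = \mathcal{O}_2^\Lambda$, which fails OAP and LLP by the choice of $\Gamma$.

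For the homogeneity condition, I would refine the construction of Theorem \ref{Thm} so that it carries a self-similarity under a shift on $\Lambda$ moving each $F_n$ to $F_{n+1}$. Concretely, I would arrange an injective $\ast$-endomorphism $\sigma$ of $\mathcal{O}_2$ intertwining $\alpha$ with a shift endomorphism of $\Lambda$, so that $\sigma(A_n) \subset A_{n+1}$ and $\sigma$ restricts to an isomorphism of the chunk $A_{n+m} \subset \cdots \subset A_n$ onto the chunk $A_{n+m+1} \subset \cdots \subset A_{n+1}$. Constructing $\alpha$ on top of a Bernoulli-type tensor product framework (into which the Theorem \ref{Thm} construction is grafted) provides a model in which such a $\sigma$ exists naturally.

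The main obstacle is executing both demands at once: one must redo the construction of Theorem \ref{Thm} inside a manifestly shift-self-similar framework while still identifying $\mathcal{O}_2^\Lambda$ with an intermediate \Cs-algebra of $\rg(\Gamma) \subset L(\Gamma)$ that fails OAP and LLP. A secondary technical point is verifying that both defects actually propagate from $\rg(\Gamma)$ to every such intermediate subalgebra; this amounts to producing the trace-induced conditional expectation alluded to above (or arguing via slice maps). Once the construction is in place, the remaining items --- finiteness of Watatani index, irreducibility, and the Kirchberg--Phillips identification of each $A_n$ with $\mathcal{O}_2$ --- are routine.
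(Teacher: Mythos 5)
Your overall frame does coincide with the paper's: it takes $\Lambda=\bigoplus_{\mathbb{N}}G$, exhausts it by the finite subgroups $\Lambda_n$, chooses $\Gamma=\SL(3,\mathbb{Z})$ (exact, without the AP), sets $A_n$ equal to the fixed point algebra of $\Lambda_n$, and quotes the proof of Corollary B of Suzuki's earlier paper for the failure of the OAP and the LLP of the intermediate algebra $\bigcap_n A_n$ of $\rg(\Gamma)\subset L(\Gamma)$. One caveat on that last point: your proposed mechanism, a trace-induced conditional expectation from an intermediate algebra onto $\rg(\Gamma)$, does not exist ($\rg(\Gamma)$ is weakly dense, not a von Neumann subalgebra); the actual argument is of slice-map/AP type, so your parenthetical fallback is the right one and the expectation claim should be dropped.

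The two steps carrying the real weight, however, are not correct as proposed. First, the identification $A_n\cong\mathcal{O}_2$: you cannot ``arrange $K_\ast(A_n)=0$ by a suitable model action,'' because the $\Lambda_n$-action is already dictated by the construction that pins down $\bigcap_n A_n$; outerness alone does not force the fixed point algebra of a finite group action on $\mathcal{O}_2$ to be $\mathcal{O}_2$ (there are outer finite group actions on $\mathcal{O}_2$ whose fixed point algebras have nontrivial $K$-theory), and even the UCT for $A_n$ is not automatic. The paper's fix is a genuine refinement of the Theorem~\ref{Thm} construction: the Bernoulli index set is enlarged to $I=\Lambda\sqcup\bigsqcup_{n}\Lambda/\Lambda_n$ and $\alpha$ is chosen so that $\mathcal{O}_2^{\alpha}$ contains a unital copy of $\mathcal{O}_2$; the tensor components sitting at the $\Lambda_n$-fixed cosets then produce unital embeddings of $\mathcal{O}_2$ into $(A_n)_\infty$, and Kirchberg's criterion (Kirchberg--Phillips, Lemma 3.7) yields $A_n\cong\mathcal{O}_2$. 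Second, the homogeneity condition: your shift self-similarity cannot work as stated. Since $|\Lambda_n|\neq|\Lambda_{n+1}|$, no automorphism of the system can carry $\Lambda_n$ onto $\Lambda_{n+1}$, and a shift \emph{endomorphism} is injective but not surjective, so it only embeds the chunk $A_{n+m}\subset\cdots\subset A_n$ into the next chunk; it does not give the required isomorphism of inclusions. The paper instead shows that the residual actions $\vartheta_{n,m}\colon\Upsilon_{n,m}\curvearrowright A_n$ (whose fixed point algebras are exactly $A_{n+m}$) are pairwise conjugate, by verifying condition (iii) of Izumi's uniqueness theorem (Theorem 4.2 of his Duke paper) with the same asymptotic $\mathcal{O}_2$-embeddings; conjugacy of these finite group actions is what transports one finite chunk onto another. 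A further small gap: outerness of $\vartheta_{n,m}$ as an action on $A_n$ (not merely of $\theta$ on $A$), needed for irreducibility and the index computation, requires its own central-sequence argument, which the enlarged index set again supplies.
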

Here we say that two sequences of inclusions $A_1 \subset A_2 \subset \cdots \subset A_n$
 and $B_1 \subset B_2 \subset \cdots \subset B_n$ of \Cs-algebras
are \emph{isomorphic}
if there is an isomorphism $\varphi \colon A_n \rightarrow B_n$
satisfying $\varphi(A_i)=B_i$ for $i=1, \ldots, n-1$.
We recall that an inclusion $A\subset B$ of unital simple \Cs-algebras
is said to be \emph{irreducible} if the relative commutant
is trivial: $A'\cap B=\mathbb{C}$.

We also give the following result on general Kirchberg algebras.
\begin{Propint}\label{Prop}
Let $\Lambda$ be a countable infinite group.
Let $A$ be a Kirchberg algebra.
Then $\Lambda$ admits an outer action on $A$
whose fixed point algebra does not have the completely bounded approximation property
 nor the local lifting property.
\end{Propint}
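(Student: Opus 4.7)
The plan is to reduce to Theorem~\ref{Thm} via two moves: choose the exact group $\Gamma$ so that every intermediate \Cs-algebra of $\rg(\Gamma) \subset L(\Gamma)$ automatically fails both CBAP and LLP, and then promote the construction from $\mathcal{O}_2$ to a general Kirchberg algebra $A$.

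For the first move I would take $\Gamma = \SL_3(\mathbb{Z})$, which is exact, non-amenable, and not weakly amenable (Haagerup). Let $B$ be any unital \Cs-algebra with $\rg(\Gamma) \subseteq B \subseteq L(\Gamma)$. The $\sigma$-weak density of $\rg(\Gamma)$ in $L(\Gamma)$ gives $B'' = L(\Gamma)$ together with a canonical normal surjection $B^{**} \twoheadrightarrow L(\Gamma)$. If $B$ had CBAP, then extending the finite-rank c.b.\ maps to their normal biduals would give weak-$*$ CBAP of $B^{**}$, hence of its normal quotient $L(\Gamma)$; by the de~Cannière--Haagerup--Kraus characterization this forces weak amenability of $\Gamma$, a contradiction. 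For LLP the analogous direct descent to $B''$ is unavailable, so I would choose $\Gamma$ additionally so that $\rg(\Gamma)$ itself fails LLP and leverage the concrete form of the intermediate $B$ produced by Theorem~\ref{Thm} to exhibit a u.c.p.\ projection $B \to \rg(\Gamma)$; LLP would then descend from $B$ to $\rg(\Gamma)$, contradicting the choice of $\Gamma$.

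For the second move I expect the proof of Theorem~\ref{Thm} to generalize from $\mathcal{O}_2$ to an arbitrary Kirchberg algebra $A$, since the features it exploits (strong self-absorption, pure infiniteness, Kirchberg--Phillips classification) are shared by every Kirchberg algebra, with the $\mathcal{O}_\infty$-absorption $A \cong A \otimes \mathcal{O}_\infty$ replacing $\mathcal{O}_2$-absorption. A safer fallback uses Theorem~\ref{Thm} only as a black box: first adapt its proof to $\mathcal{O}_\infty$ to obtain an outer action $\gamma$ of $\Lambda$ on $\mathcal{O}_\infty$ with intermediate fixed point $B$, then pull back $\id_A \otimes \gamma$ across the isomorphism $A \otimes \mathcal{O}_\infty \cong A$. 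Simplicity of $A$ gives $A' \cap (A \otimes \mathcal{O}_\infty) = \mathbb{C} \otimes \mathcal{O}_\infty$, so outerness of $\gamma$ lifts to $\id_A \otimes \gamma$; nuclearity of $A$ gives fixed point $A \otimes B$, and any slice $\omega \otimes \id_B$ for a state $\omega$ on $A$ is a u.c.p.\ projection $A \otimes B \to B$, so failure of CBAP and of LLP descends from $B$ to $A \otimes B$.

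The main obstacle is the LLP half of the first move: CBAP can be ruled out for every intermediate $B$ uniformly by the bidual argument, whereas LLP does not transfer to a von Neumann quotient of $B^{**}$ and so forces one to exploit the specific shape of the $B$ constructed in Theorem~\ref{Thm} in order to obtain a u.c.p.\ projection $B \to \rg(\Gamma)$. A secondary obstacle is the technical audit confirming that the construction of Theorem~\ref{Thm} ports to general Kirchberg algebras.
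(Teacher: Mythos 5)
Your high-level architecture (settle one model algebra, then tensor with $\id_A$ using $A\otimes\mathcal{O}_\infty\cong A$) matches the paper's opening reduction, but both of your ``moves'' contain genuine gaps, and the paper's actual proof is engineered precisely to avoid them. For the CBAP: the step ``CBAP of $B$ gives weak-$*$ CBAP of $B^{**}$'' does not follow from taking second adjoints. If $\phi_i\to\id_B$ in point-norm with $\sup_i\|\phi_i\|_{cb}<\infty$, the maps $\phi_i^{**}$ are normal, finite rank and cb-bounded, but $\langle\phi_i^{**}(x),f\rangle=\langle x,\phi_i^{*}f\rangle$ and one only knows $\phi_i^{*}f\to f$ in $\sigma(B^*,B)$, not in $\sigma(B^*,B^{**})$; a bounded weak-$*$ convergent net in $B^*$ need not converge weakly, so point-weak-$*$ convergence of $\phi_i^{**}$ on $B^{**}$ is not obtained. (The conclusion you want for $\Gamma=\SL(3,\mathbb{Z})$ is true, but via the weaker OAP: intermediate algebras of $\rg(\Gamma)\subset L(\Gamma)$ fail the OAP because $\Gamma$ fails the Haagerup--Kraus AP --- the slice-map argument of \cite{Suz17} that the paper invokes in Corollary \ref{Cor} --- and CBAP implies OAP.) For the LLP you correctly flag the hole, but the proposed patch is not available: there is no u.c.p.\ projection from the intermediate algebra $A^{\theta}$ onto $\rg(\Gamma)$; producing one is essentially the same Fourier-summation problem that makes $A^{\theta}$ strictly larger than $\rg(\Gamma)$ when $\Gamma$ lacks the AP. The paper runs the conditional expectation in the \emph{opposite} direction: it takes $\Gamma$ a free group of infinite rank and embeds $\rg(\mathbb{Z}_2\wr\Gamma)$ \emph{into} the fixed point algebra together with a conditional expectation \emph{onto} it; since $\mathbb{Z}_2\wr\Gamma$ is not weakly amenable (Ozawa) and $\rg(\mathbb{Z}_2\wr\Gamma)$ fails the LLP, both failures descend along that expectation with no bidual or intermediate-algebra analysis needed.

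The second move also underestimates the real obstruction. Theorem \ref{Thm} rests on $(\bigotimes_\Lambda\mathcal{O}_2)\rca{\beta}\Gamma\cong\mathcal{O}_2$, which is available because $\mathcal{O}_2$-absorption kills all K-theoretic information. For $\mathcal{O}_\infty$ the analogous crossed product has nontrivial $K$-theory (Pimsner--Voiculescu) and is \emph{not} $\mathcal{O}_\infty$; $\mathcal{O}_\infty$-absorption does not trivialize $K_*$. The bulk of the paper's proof of Proposition \ref{Prop} exists to repair exactly this: an auxiliary action $\gamma\colon\Gamma\curvearrowright C$ with $K_0(C)\cong\bigoplus_\Gamma\mathbb{Z}$ carrying the shift, a Pimsner--Voiculescu computation giving $(K_0,[1]_0,K_1)\cong(\mathbb{Z},0,0)$ for the crossed product $D$, a projection $p$ representing a generator of $K_0$, and Kirchberg--Phillips classification to conclude $pDp\cong\mathcal{O}_\infty$, with the $\Lambda$-action then restricted to this corner. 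None of this is supplied by ``a technical audit confirming that the construction ports''; it is the main content of the proof. (Your final tensoring step itself is fine: outerness via the relative commutant, identification of the fixed point algebra via the slice map property for nuclear $A$, and descent of the failures along $\omega\otimes\id$ all work, and agree with the paper's one-line reduction.)
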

Here we recall that a simple separable nuclear purely infinite \Cs-algebra
is called a \emph{Kirchberg algebra}.
Kirchberg algebras form an important class of \Cs-algebras.
A side of rich structures of Kirchberg algebras is reflected to the successful classification theorem of Kirchberg--Phillips \cite{Kir94b}, \cite{Phi}.
We refer the reader to the book \cite{Ror} for fundamental facts and backgrounds on this subject.

The key ingredients of our constructions (besides well-known deep results)
are ``amenable'' actions of non-amenable groups on Kirchberg algebras
recently obtained in \cite{Suzeq}, \cite{Suz19}.

We expect that our results
give a new intuitive picture of the inaccessibility of
conjugacy classes of \Cs-dynamical systems,
in contrast to successful classification results
\emph{up to cocycle conjugacy} (see e.g., \cite{Nak}, \cite{IM}, \cite{IM2}, \cite{Sz18}, and references therein).

\subsection*{Notations}
Here we fix a few notations used in this paper.
\begin{itemize}
\item For $\epsilon>0$ and for two elements $x$, $y$ of a \Cs-algebra,
denote by $x\approx_{\epsilon} y$ if $\|x -y\| <\epsilon$.
\item For a group action $\alpha \colon \Gamma \curvearrowright A$ on a \Cs-algebra $A$,
denote by $A^\alpha$ the fixed point algebra of $\alpha$:
\[A^\alpha:=\{ a\in A: \alpha_s(a)=a {\rm~for~all~}s\in \Gamma\}.\]
\item The symbols `$\otimes$', `$\rc$', `$\vnc$' stand
for the minimal tensor products, the reduced \Cs-crossed products,
and the von Neumann algebra crossed products respectively.

\end{itemize}
For basic facts on \Cs-algebras and discrete groups, we
refer the reader to the book \cite{BO}.
\section{Proofs, constructions, and remarks}

The following lemma would be well-known for specialists.
For the reader's convenience, we include a proof.
\begin{Lem}\label{Lem:fp}
Let $\Lambda$ be a group.
Let $I$ be a $\Lambda$-set
such that all $\Lambda$-orbits are infinite.
Let $A$ be a unital \Cs-algebra.
Let $\sigma \colon \Lambda \curvearrowright \bigotimes_I A$
be the tensor shift action.
Then 
$(\bigotimes_I A)^{\sigma}=\mathbb{C}$.
\end{Lem}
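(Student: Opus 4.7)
The plan is to use the classical slice-map argument, combined with a coset-avoidance fact for the $\Lambda$-set $I$ that follows from the infinite-orbit hypothesis.

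Given $x\in(\bigotimes_I A)^\sigma$ and $\epsilon>0$, I would first approximate $x$ within $\epsilon$ by an element $y$ lying in the canonical copy of $\bigotimes_F A$ inside $\bigotimes_I A$ for some finite $F\subset I$; this is possible by the very definition of the infinite minimal tensor product.

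The next task is to find $s\in \Lambda$ with $sF\cap F=\emptyset$. For each ordered pair $(i,j)\in F\times F$, the set $\{s\in \Lambda:si=j\}$ is either empty or a left coset of the stabilizer of $i$; the assumption that every orbit is infinite means every such stabilizer has infinite index in $\Lambda$. Hence $\{s:sF\cap F\neq\emptyset\}$ is a union of at most $|F|^2$ cosets of infinite-index subgroups, and by B.H.\ Neumann's lemma on coverings by cosets it cannot exhaust $\Lambda$.

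Fixing any state $\phi$ on $A$, let $E:=\id_{\bigotimes_F A}\otimes\phi^{\otimes(I\setminus F)}\colon \bigotimes_I A\to\bigotimes_F A$, which is a contractive unital completely positive map. Then $E(y)=y$, while $\sigma_s(y)$ is supported on $sF\subset I\setminus F$, so $E(\sigma_s(y))=\lambda\cdot 1$ for some $\lambda\in\mathbb{C}$. Invariance $\sigma_s(x)=x$ gives $y\approx_{2\epsilon}\sigma_s(y)$, whence, after applying the contraction $E$, $y\approx_{2\epsilon}\lambda$, and finally $x\approx_{3\epsilon}\lambda$. Letting $\epsilon\to 0$ places $x$ in $\mathbb{C}\cdot 1$.

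The only non-routine point is the coset-avoidance claim producing $s$; once it is in hand, the slice-map computation is immediate.
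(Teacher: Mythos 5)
Your argument is correct and follows essentially the same route as the paper: approximate the invariant element by $y\in\bigotimes_F A$, produce $s\in\Lambda$ with $sF\cap F=\emptyset$ via B.H.\ Neumann's lemma on coverings by cosets of infinite-index stabilizers, and apply the slice map $E_F$ to conclude. The only cosmetic difference is that you phrase the coset-avoidance step directly as a non-covering statement with at most $|F|^2$ cosets, while the paper runs the same Neumann-lemma argument by contradiction.
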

\begin{proof}
Take $x\in (\bigotimes_I A)^{\sigma}$.
Let $\epsilon >0$.
Fix a state $\varphi$ on $A$.
For each $S\subset I$,
define a conditional expectation $E_S \colon \bigotimes _I A \rightarrow \bigotimes _S A$
to be $E_S:= (\bigotimes_S \id_A) \otimes (\bigotimes_{I \setminus S} \varphi)$.
Choose a finite subset $F$ of $I$ and an element
$y\in \bigotimes _F A$ satisfying
$x \approx_\epsilon y$.
By the assumption on $I$,
one can choose $s\in \Lambda$ satisfying
$sF \cap F =\emptyset$.
(The existence of such $s$ is obvious in applications in the present paper.
For completeness, we give a proof for general case.
To lead to a contradiction, assume that there is no such $s$.
Then one can find sequences $s_1, \ldots, s_n\in \Gamma$ and $x_1, \ldots, x_n \in F$
satisfying $\Gamma = \bigcup_{k=1}^n s_k{\rm Stab}(x_k)$.
Here ${\rm Stab}(x_k)$ denotes the stabilizer subgroup of $x_k$.
Since each ${\rm Stab}(x_k)$ has infinite index in $\Gamma$,
this is a contradiction; for the proof, see \cite{Neu}, Lemma 4.1.)
Observe that
\[\sigma_s(y)\approx_{\epsilon} \sigma_s(x)=x \approx_{\epsilon} y.\]
This implies
\[x \approx_\epsilon y= E_{F}(y) \approx_{2\epsilon} E_{F}(\sigma_s(y))\in \mathbb{C}.\]
Since $\epsilon>0$ is arbitrary, this yields $x\in \mathbb{C}$.
\end{proof}
Recall that an automorphism $\alpha$ of a \Cs-algebra $A$ is said to be \emph{inner}
if there is a unitary multiplier $u$ of $A$
satisfying $\alpha(a)=uau^\ast$ for all $a\in A$.
An action $\alpha$ of a discrete group $\Gamma$ on $A$ is said to be \emph{outer}
if $\alpha_s$ is not inner for all $s\in \Gamma \setminus \{e\}$.
For simple \Cs-algebras, outerness of an action can be regarded as a non-commutative analogue of
(topological) freeness of topological dynamical systems; see \cite{Kis} for instance.

To confirm outerness of actions, we use the central sequence algebras.
Here we briefly recall them.
For a unital \Cs-algebra $A$,
denote by $\ell^\infty(\mathbb{N}, A)$ the \Cs-algebra of all bounded sequences of $A$.
Let $c_0(\mathbb{N}, A)$ denote the ideal of $\ell^\infty(\mathbb{N}, A)$ consisting
of all sequences tending to $0$ in norm.
Set $A^\infty:= \ell^\infty(\mathbb{N}, A)/ c_0(\mathbb{N}, A)$.
Then we have an embedding
$A \rightarrow A^\infty$; $a \mapsto (a, a, \ldots) + c_0(\mathbb{N}, A)$.
By this embedding, we regard $A$ as a \Cs-subalgebra of $A^\infty$.
Define $A_\infty := A^\infty \cap A'$.
This is the central sequence algebra of $A$.
Any automorphism $\alpha$ of $A$
induces an automorphism on $\ell^\infty(\mathbb{N}, A)$
by pointwise application.
This further induces the automorphism $\alpha_\infty$ on $A_\infty$.
Observe that if $\alpha$ is inner, then $\alpha_\infty$
is trivial.
\begin{proof}[Proof of Theorem \ref{Thm}]
By the proof of Proposition B and Remark 3.3 in \cite{Suzeq},
one can take an action $\alpha \colon \Gamma \curvearrowright \mathcal{O}_2$
satisfying the following conditions.
\begin{itemize}
\item $\mathcal{O}_2^\alpha\neq \mathbb{C}$.

\item
Denote by
\[\beta\colon \Gamma \curvearrowright \bigotimes_\Lambda \mathcal{O}_2\]
the diagonal action of copies of $\alpha$ indexed by $\Lambda$.
Then \[A:=\left(\bigotimes_\Lambda \mathcal{O}_2\right) \rca{\beta} \Gamma \cong \mathcal{O}_2.\]
\end{itemize}

Let $\sigma \colon \Lambda \curvearrowright \bigotimes_\Lambda \mathcal{O}_2$
denote the left shift action.
Then $\sigma$ commutes with $\beta$.
Hence $\sigma$ extends to an action
$\theta\colon \Lambda \curvearrowright A$
via the formula
\[\theta_t(x u_s):=\sigma_t(x)u_s {\rm~for~} x\in \bigotimes_\Lambda \mathcal{O}_2, s\in \Gamma, t\in \Lambda.\]
We next show that $\theta_\infty \colon \Lambda \curvearrowright A_\infty$ is faithful, which yields the outerness of $\theta$.
Choose an element $a\in \mathcal{O}_2^\alpha\setminus \mathbb{C}$.
For each $t\in \Lambda$, denote by $a[t]$
the image of $a$ under the embedding of $\mathcal{O}_2$
into the $t$-th tensor product component of $\bigotimes_\Lambda \mathcal{O}_2$.
Observe that each $a[t]$ commutes with $u_s \in A$ for all $s\in \Gamma$.
Fix a state $\varphi$ on $\mathcal{O}_2$.
Then for any $t, u \in \Lambda$ with $t\neq u$,
we have
$\|a[t]-a[u]\|\geq\| [(\bigotimes_{\{t\}} \id_A )\otimes (\bigotimes_{\Gamma \setminus \{t\}} \varphi)](a[t]-a[u])\|= \|\varphi(a)-a\|>0$.
Take a sequence $(t_n)_{n=1}^\infty$ in $\Lambda$ which tends to infinity.
Then the sequence $(a[t_n])_{n=1}^\infty$ defines an
element $x$ of $A_\infty$.
The above inequality shows that 
 $\theta_{\infty, t}(x) \neq x$ for all $t\in \Lambda \setminus \{e\}$.
Thus $\theta_\infty$ is faithful.

Put $M:= (\bigotimes_\Lambda \mathcal{O}_2)\sd$.
Let $\bar{\beta}$ and $\bar{\sigma}$ denote the weakly continuos extensions of $\beta$ and $\sigma$ to $M$ respectively.
Let $\bar{\theta}$ denote the action
of $\Lambda$ on $M\vrca{\bar{\beta}} \Gamma$ defined analogously to $\theta$.
Then the inclusion $\bigotimes_\Lambda \mathcal{O}_2 \subset M$
extends to the $\Lambda$-equivariant inclusion
$A \subset M \vrca{\bar{\beta}} \Gamma$.
Since $(\bigotimes_\Lambda \mathcal{O}_2)^\sigma =\mathbb{C}$ (by Lemma \ref{Lem:fp}),
by Corollary 3.4 of \cite{Suz19}, we have
$A^{\theta} \subset L(\Gamma)$.
Moreover, when $\Gamma$ has the AP, thanks to Proposition 3.4 of \cite{Suz17},
we further obtain
$A^{\theta}=\rg(\Gamma)$.
\end{proof}

\begin{proof}[Proof of Corollary \ref{Cor}]
We fix a non-trivial finite group $G$ and set $\Lambda := \bigoplus_{\mathbb{N}} G$. 
Put $\Gamma:=\SL(3, \mathbb{Z})$. Note that $\Gamma$ is exact
(see Section 5.4 of \cite{BO}) and does not have the AP \cite{LS}.
For each $n\in \mathbb{N}$, set $\Lambda_n :=\bigoplus_{k=1}^n G \subset \Lambda$.
 Then, note that $\Lambda_n \subset \Lambda_{n+1}$ and that $\Lambda = \bigcup_{n=1}^\infty \Lambda_n$.
Take an action $\alpha \colon \Gamma \curvearrowright \mathcal{O}_2$
as in the proof of Theorem \ref{Thm}.
By the construction in \cite{Suzeq} (see Proposition B and Remark 3.3), we may further assume that the fixed point algebra $\mathcal{O}_2^{\alpha}$
admits
a unital embedding $\iota\colon \mathcal{O}_2 \rightarrow \mathcal{O}_2^{\alpha}$.
Define $I:= \Lambda \sqcup \bigsqcup_{n=1}^\infty \Lambda/\Lambda_n$.
We equip $I$ with the left translation $\Lambda$-action.
Let $\beta \colon \Gamma \curvearrowright \bigotimes_I \mathcal{O}_2$
denote the diagonal action of copies of $\alpha$ indexed by $I$.
Let $\sigma \colon \Lambda \curvearrowright \bigotimes_I \mathcal{O}_2$
denote the tensor shift action.
Then $\sigma$ commutes with $\beta$.
Thus $\sigma$ induces the $\Lambda$-action $\theta$
on $A:=(\bigotimes_I \mathcal{O}_2) \rca{\beta} \Gamma$ via the formula
\[\theta_t(x u_s):=\sigma_t(x)u_s {\rm~for~} x\in \bigotimes_I \mathcal{O}_2, s\in \Gamma, t\in \Lambda.\]
For $n\in \mathbb{N}$, set $\theta_n :=\theta|_{\Lambda_n}$
and define $A_n :=A^{\theta_n}$.
Then note that $A_{n+1} \subset A_n$ for all $n\in \mathbb{N}$
and that $\bigcap_{n=1}^\infty A_n= A^\theta$.
As in the proof of Theorem \ref{Thm}, we obtain $\rg(\Gamma) \subset A^\theta \subset L(\Gamma)$.
For $n, m \in\mathbb{N}$, set $\Upsilon_{n, m}:= \bigoplus_{k=n+1}^{n+m} G \subset \Lambda$.
Observe that, as $\Upsilon_{n, m}$ commutes with $\Lambda_n$,
the \Cs-subalgebra $A_n$ is invariant under $\theta(\Upsilon_{n, m})$ for all $m\in \mathbb{N}$.
Let $\vartheta_{n, m} \colon \Upsilon_{n, m} \curvearrowright A_n$ denote the restricted action of $\theta$.
Then, for each $n, m\in \mathbb{N}$,
since $\Lambda_n \cdot \Upsilon_{n, m} = \Lambda_{n+m}$,
we have $A_n^{\vartheta_{n, m}}=A_{n+m}$.
By a similar argument to the proof of Theorem \ref{Thm},
one can show the outerness of $\vartheta_{n, m}$.
By the proof of Corollary B of \cite{Suz17},
the intersection algebra
$\bigcap_{n=1}^\infty A_n$ does not have the operator approximation
property nor the local lifting property.
Since $\theta$ is outer and each $\Lambda_n$ is finite,
the corresponding fixed point algebras $A_n$ are simple and nuclear.
By Remark 3.14 of \cite{Izu}, each inclusion $A_{n} \subset A_1$ is irreducible.
By Corollary 3.12 of \cite{Izu}, each inclusion $A_{n+1} \subset A_n$
has finite Watatani index. (In fact the index is $\sharp G$.)

We next show that for each $n\in \mathbb{N}$, $A_n$ is isomorphic to $\mathcal{O}_2$.
To see this, for each $k\in \mathbb{N}$, let $\iota_{k}$ denote the composite of
the unital embedding $\iota\colon \mathcal{O}_2 \rightarrow \mathcal{O}_2^{\alpha}$
and the canonical embedding
of $\mathcal{O}_2^{\alpha}$ into the $\Lambda_k$-th
tensor product component of $\bigotimes_I \mathcal{O}_2^{\alpha} \subset A$.
Then the sequence $(\iota_{k+n})_{k=1}^\infty$
defines a unital embedding of $\mathcal{O}_2$ into $(A_n)_\infty$.
By Kirchberg's theorem (see \cite{KP}, Lemma 3.7),
$A_n$ is isomorphic to $\mathcal{O}_2$.

Finally we show the homogeneity condition described in the statement.
Let $m\in \mathbb{N}$ be given.
Then, for each $n\in \mathbb{N}$, the sequence $(\iota_{k+n+m})_{k=1}^\infty$ witnesses that $\vartheta_{n, m}$ satisfies condition (iii) of Theorem 4.2 of \cite{Izu04}.
Thus, by Theorem 4.2 of \cite{Izu04},
the actions $\vartheta_{n, m}$, $n\in \mathbb{N}$, are pairwise conjugate
(after the canonical identifications $\Lambda_{n, m}\cong \Lambda_{l, m}$ by shifting indices).
Consequently, the inclusions $A_{n+m} \subset \cdots \subset A_n$, $n\in \mathbb{N}$,
are pairwise isomorphic.
\end{proof}
\begin{Rem}\label{Rem:BE}
The construction in the proof of the Corollary works for \emph{any} countable exact group $\Gamma$ instead of $\SL(3, \mathbb{Z})$.
The isomorphism classes of the resulting inclusions
\[A_{n+m} \subset \cdots \subset A_n\]
do \emph{not} depend on the choice of $\Gamma$ (when $G$ is fixed).
The resulting intersection algebra is an intermediate \Cs-algebra
of $\rg(\Gamma) \subset L(\Gamma)$,
and when $\Gamma$ satisfies the AP,
it is in fact equal to the reduced group \Cs-algebra $\rg(\Gamma)$.
\end{Rem}
\begin{Rem}
One can also arrange the fixed point algebra in
Theorem \ref{Thm} and Remark \ref{Rem:BE} to be $A \rca{\zeta} \Gamma$
when $\Gamma$ has the AP, $A$ is a unital separable nuclear \Cs-algebra,
and $\zeta\colon \Gamma \curvearrowright A$ is an action without $\Gamma$-invariant proper ideals.
Here we sketch how to modify the proof in the former case.
The latter case is similarly obtained.
From now on, we use the notations in the proof of Theorem \ref{Thm}.
We first replace $\alpha$ by the diagonal action of $\alpha$, the left shift action on $\bigotimes_{\Gamma} \mathcal{O}_2$, and the trivial action
on $\mathcal{O}_2$ (the underlying \Cs-algebra is again identified with $\mathcal{O}_2$ by Theorem 3.8 of \cite{KP}).
Denote by $\tilde{\beta}$
the diagonal action of $\beta$ and $\zeta$.
Then the reduced crossed product $B$ of $\tilde{\beta}$ is nuclear by the choice of $\alpha$ (cf.~ \cite{Suzeq}, Proposition B).
Theorem 7.2 of \cite{OP} shows that
 $B$ is simple.
By the choice of $\alpha$, $B \cong B \otimes \mathcal{O}_2$.
By Theorem 3.8 of \cite{KP}, $B\cong \mathcal{O}_2$. 
Set $\tilde{\sigma}:=\sigma \otimes \id_A$.
Define $\tilde{\theta} \colon \Lambda \curvearrowright B$ analogous to $\theta$ by using $\tilde{\sigma}$ instead of $\sigma$.
The proof of Lemma \ref{Lem:fp} shows
that $(\bigotimes_\Lambda \mathcal{O}_2 \otimes A)^{\tilde{\sigma}}=A^\zeta$.  Proposition 3.4 in \cite{Suz17} then yields that $B^{\tilde{\theta}} = A \rca{\zeta} \Gamma$.
\end{Rem}
We finally prove Proposition \ref{Prop}.
The proof involves the (K)K-theory.
For basic facts and terminologies on this subject, we refer the reader to the book \cite{Bla}.
\begin{proof}[Proof of Proposition \ref{Prop}]
Thanks to Kirchberg's $\mathcal{O}_\infty$-absorption theorem (\cite{KP}, Theorem 3.15)
we only need to show the statement for the Cuntz algebra $\mathcal{O}_\infty$.

Let $\Gamma$ be a countable free group of infinite rank.
In the proof of Theorem 5.1 of \cite{Suz19},
we have constructed an action $\alpha \colon \Gamma \curvearrowright \mathcal{O}_\infty$ which contains a unital amenable $\Gamma$-\Cs-subalgebra in the sense of Definition 4.3.1 in \cite{BO}.
By replacing $\alpha$ by the diagonal action of $\alpha$ and the trivial $\Gamma$-action on $\mathcal{O}_\infty$ (cf.~Theorem 3.15 of \cite{KP}) if necessary,
we may assume that $\mathcal{O}_\infty^{\alpha} \neq \mathbb{C}$.
Denote by $\beta$ the diagonal action of copies of $\alpha$ indexed by $\Lambda$. 

By Theorem 8.4.1 (iv) in \cite{Ror}, one can
choose an action $\gamma \colon \Gamma \curvearrowright C$
on a unital Kirchberg algebra $C$ satisfying the universal coefficient theorem \cite{RS} with the following conditions:
\begin{enumerate}
\item $(K_0(C), [1]_0, K_1(C))\cong (\bigoplus_\Gamma \mathbb{Z},  0, 0)$,
\item the induced action $\Gamma \curvearrowright K_0(C)$ is conjugate
to the left shift action $\Gamma \curvearrowright \bigoplus_\Gamma \mathbb{Z}$.
\end{enumerate}
Observe that, thanks to \cite{FKK} (see also \cite{KOS}),
by composing $\gamma_s$ with an approximately inner automorphism
for each canonical generator $s$ of $\Gamma$ if necessary,
we may assume that $\gamma$ admits an invariant (pure) state.
(This manipulation does not affect the above conditions
since approximately inner automorphisms act trivially on the K-groups.)
By the Pimsner--Voiculescu exact sequence \cite{PV},
conditions (1) and (2) imply the isomorphism
\[(K_0(C\rca{\gamma} \Gamma), [1]_0, K_1(C\rca{\gamma} \Gamma))\cong (\mathbb{Z}, 0, 0).\]
(See the second paragraph of the proof of Theorem 5.1 in \cite{Suz19} for details of computation.)

Let $\zeta \colon \Gamma \curvearrowright \bigotimes_\Gamma \mathcal{O}_\infty$
denote the tensor shift action.
 Let $\theta$ denote the diagonal action of $\gamma$ and $\zeta$,
and let $\eta$ denote the diagonal action of $\theta$ and $\beta$.
By the Pimsner--Voiculescu exact sequence \cite{PV},
the inclusion map
\[C \rca{\gamma} \Gamma \rightarrow D:=\left(C \otimes \bigotimes_{\Gamma}\mathcal{O}_\infty\otimes \bigotimes_{\Lambda}\mathcal{O}_\infty \right) \rca{\eta} \Gamma\] induces an isomorphism on $K$-theory.
Note also that $D$
satisfies the universal coefficient theorem by \cite{PV} (see also Corollary 7.2 in \cite{RS}).
By the choice of $\alpha$, $D$ is nuclear (cf.~\cite{Suzeq}, Proposition B).
By Kishimoto's theorem \cite{Kis},
one can conclude that $C \rca{\gamma} \Gamma$, $(C\otimes \bigotimes_{\Gamma} \mathcal{O}_\infty) \rca{\theta} \Gamma$, and $D$ are simple and purely infinite
(see e.g.~Lemma 6.3 of \cite{Suz19b} for details).
By Cuntz's theorem \cite{CunK}, one can take a projection $p$ in $C \rca{\gamma}  \Gamma$
which represents a generator of $K_0(C \rca{\gamma} \Gamma)$.
By the classification theorem of Kirchberg--Phillips \cite{Kir94b}, \cite{Phi}, we obtain
$p D p \cong \mathcal{O}_\infty$.

Next let $\xi\colon \Lambda \curvearrowright C \otimes \bigotimes_{\Gamma}\mathcal{O}_\infty\otimes \bigotimes_{\Lambda}\mathcal{O}_\infty$ denote the diagonal action of the trivial action $\Lambda \curvearrowright C \otimes \bigotimes_{\Gamma}\mathcal{O}_\infty$ and the tensor shift action $\Lambda \curvearrowright \bigotimes_\Lambda \mathcal{O}_\infty$.
Then $\xi$
commutes with $\eta$.
Therefore $\xi$ extends to the action \[\sigma \colon \Lambda \curvearrowright D\]
satisfying $\sigma_t(u_s)= u_s$ for all $s\in \Gamma$ and $t\in \Lambda$.
Observe that $p$ is $\sigma$-invariant.
Therefore $\sigma$ restricts to the action
\[\varsigma\colon \Lambda \curvearrowright pD p.\]
We show that $\varsigma$ satisfies the desired properties.
By the same argument as in the proof of Theorem \ref{Thm},
one can check that $\varsigma$ is outer.
Observe that
\[(pDp)^\varsigma = p D^\sigma p.\]
The proof of Lemma \ref{Lem:fp} together with Proposition 3.4 of \cite{Suz17} shows that
\[D^\sigma = \left(C \otimes \bigotimes_{\Gamma}\mathcal{O}_\infty\right) \rca{\theta} \Gamma.\]
Since $C$ admits a $\gamma$-invariant state,
the inclusion $(\bigotimes_\Gamma \mathcal{O}_\infty) \rca{\zeta} \Gamma \subset D^\sigma$
admits a conditional expectation (see Exercise 4.1.4 of \cite{BO} for instance).

Denote by $\mathbb{Z}_2 \wr \Gamma:= (\bigoplus_\Gamma \mathbb{Z}_2) \rtimes \Gamma$
the wreath product group: the semidirect product of $\bigoplus_\Gamma \mathbb{Z}_2$
by the left shift $\Gamma$-action.
To study properties of $D^\sigma$, we next construct an embedding
\[\rg(\mathbb{Z}_2 \wr \Gamma) \rightarrow \left( \bigotimes_\Gamma \mathcal{O}_\infty \right) \rca{\zeta} \Gamma\] whose image admits a conditional expectation.
Take a unital embedding
\[\iota \colon \rg(\mathbb{Z}_2) \cong \mathbb{C}\oplus \mathbb{C} \rightarrow \mathcal{O}_\infty.\]
Choose minimal projections $q_1, q_2$ in $\iota(\rg(\mathbb{Z}_2))$ with $q_1+q_2=1$.
Take a state $\varphi_i$ on $q_i \mathcal{O}_\infty q_i$
for $i=1, 2$.
Define $E\colon \mathcal{O}_\infty \rightarrow \iota(\rg(\mathbb{Z}_2))$ 
to be $E(x):=\varphi_1(q_1 xq_1)q_1 +\varphi_2(q_2 x q_2)q_2$, $x\in \mathcal{O}_\infty$.
Then $E$ is a conditional expectation.
We equip  $\bigotimes_\Gamma\rg( \mathbb{Z}_2)$ with
the left shift $\Gamma$-action.
Notice that the canonical isomorphism
$ \rg(\bigoplus_\Gamma \mathbb{Z}_2) \cong\bigotimes_\Gamma\rg( \mathbb{Z}_2)$
preserves the left shift $\Gamma$-actions.
Hence it extends to an isomorphism
$\rg(\mathbb{Z}_2 \wr \Gamma)\cong [\bigotimes_\Gamma\rg( \mathbb{Z}_2)]\rc\Gamma $. We identify these two \Cs-algebras via this isomorphism.
Define
$\tilde{\iota} :=\bigotimes_\Gamma \iota \colon \bigotimes_\Gamma\rg( \mathbb{Z}_2)  \rightarrow \bigotimes_\Gamma \mathcal{O}_\infty.$
Then $\tilde{\iota}$ is a $\Gamma$-equivariant unital embedding.
Therefore it extends to an embedding
\[\rg(\mathbb{Z}_2 \wr \Gamma)\rightarrow \left(\bigotimes_\Gamma \mathcal{O}_\infty\right) \rca{\zeta} \Gamma.\]
We show that this inclusion admits a conditional expectation.
Set $\tilde{E}:= \bigotimes_\Gamma E$.
Then $\tilde{E}$ is a $\Gamma$-equivariant conditional expectation of the inclusion $\tilde{\iota}(\bigotimes_\Gamma\rg( \mathbb{Z}_2)) \subset \bigotimes_\Gamma \mathcal{O}_\infty$.
By Exercise 4.1.4 in \cite{BO}, the map $\tilde{E}$ extends to the desired conditional expectation.

Now by Corollary 4 of \cite{Oz} and Theorem 12.3.10 of \cite{BO},
$\rg(\mathbb{Z}_2 \wr \Gamma)$, thus $ D^\sigma$ does not have the completely bounded approximation property.
Since $\rg(\mathbb{Z}_2 \wr \Gamma)$ fails the local lifting property by Corollary 3.7.12 in \cite{BO}, so does $D^\sigma$.
Since $D^\sigma$
is simple and purely infinite,
it is isomorphic to a corner of $(pDp)^\varsigma=p D^\sigma p$.
Thus $\varsigma$ possesses the desired properties.
\end{proof}
\begin{Rem}
Actions stated in Theorem \ref{Thm} and Proposition \ref{Prop}
can be arranged to be \emph{centrally free} in the sense of \cite{Suz19}
(see Definition 4.1 and the sentence below it).
To see this, we choose the action $\alpha$ in the proofs
to satisfy the additional condition that the fixed point algebra contains a unital simple (non-trivial) \Cs-subalgebra. (For instance, replace $\alpha$ by its diagonal action with the trivial action on $\mathcal{O}_\infty$.)
Then the central freeness of the resulting actions
follows from that of the Bernoulli shift actions over unital simple \Cs-algebras;
see Example 4.10 of \cite{Suz19}.
\end{Rem}
\begin{Rem}
Izumi's remarkable theorem (\cite{Izu04}, Theorem 4.2) states that any finite group $G$
admits a (unique) action on $\mathcal{O}_2$ which tensorially
absorbs all $G$-actions on simple unital separable nuclear \Cs-algebras (up to conjugacy).
Theorem \ref{Thm} suggests the non-existence of such an action for
countable infinite amenable groups.
In fact, if we have such $\alpha$,
then by Theorem \ref{Thm},
the fixed point algebra $\mathcal{O}_2^\alpha$ admits conditional
expectations onto
unital isomorphs of the fixed point algebras in Theorem \ref{Thm}.
(In particular $\mathcal{O}_2^\alpha$ fails the operator approximation property
and the local lifting property.)
A related problem is discussed in \cite{Oz04} (see Theorem 4 and the sentence above it).
Observe that the proof of Theorem 4 in \cite{Oz04} shows that,
letting $(\Gamma_i)_{i \in I}$ be a family of (countable) groups as in the statement,
there is no separable \Cs-algebra $A$ with the following property:
For any $i \in I$,
there are completely positive maps
$\varphi_{i} \colon \rg(\Gamma_i) \rightarrow A$
and $\psi_{i} \colon A \rightarrow L(\Gamma_i)$
with $\psi_i \circ \varphi_i = \id_{\rg(\Gamma_i)}$. 
\end{Rem}
\begin{Rem}
Theorem \ref{Thm} and Proposition \ref{Prop} extend to second countable totally disconnected locally compact non-compact groups $\Lambda$.
To see this, choose a decreasing sequence $(K_n)_{n=1}^\infty$
of compact open subgroups of $\Lambda$ with trivial intersection.
Then use the tensor shift action over the $\Lambda$-set $I:=\bigsqcup_{n=1}^\infty \Lambda/ K_n$
instead of the plain Bernoulli shift actions in their proofs.
\end{Rem}
\subsection*{Acknowledgements}
The author is grateful to the referee for careful reading and helpful suggestions.
This work was supported by JSPS KAKENHI Early-Career Scientists
(No.~19K14550) and tenure track funds of Nagoya University.


\begin{thebibliography}{99}
\bibitem{Bla}B.~Blackadar, {\it K-Theory for operator algebras.}
Second edition, Mathematical
Sciences Research Institute Publications 5 (1998), Berkeley, CA. 
\bibitem{BO} N.~ P.~ Brown, N.~Ozawa, {\it \Cs-algebras and finite-dimensional approximations.} Graduate Studies in Mathematics {\bf 88}. American Mathematical Society, Providence, RI, 2008.
\bibitem{Con}A.~ Connes, {\it Classification of injective factors. Cases II$_1$, II$_\infty$, III$_\lambda$, $\lambda \neq 1$.} Ann.~ of Math.~ (2) {\bf 104} (1976), no.~1, 73--115.
\bibitem{Cun} J.~ Cuntz, {\it Simple \Cs-algebras generated by isometries.} Comm.~ Math.~ Phys.~ {\bf 57} (1977), 173--185.
\bibitem{Cun78}J.~ Cuntz, {\it Dimension functions on simple \Cs-algebras.} Math.~ Ann.~ {\bf 233} (1978), 145--153.
\bibitem{CunK}J.~Cuntz, {\it K-theory for certain \Cs-algebras.} Ann.~of Math.~ {\bf 113} (1981), 181--197.
\bibitem{FKK} H.~ Futamura, N.~ Kataoka, A.~ Kishimoto, {\it Homogeneity of the pure state space for separable \Cs-algebras.} Internat.~ J.~ Math.~ {\bf 12} (2001), no. 7, 813--845. 
\bibitem{Haa}U.~ Haagerup, {\it Quasitraces on exact \Cs-algebras are traces.} C.~ R.~ Math.~ Acad.~ Sci.~ Soc.~ R.~ Can.~ {\bf 36} (2014), 67--92. 
\bibitem{HK}U.~ Haagerup, J.~ Kraus, {\it Approximation properties for group \Cs-algebras and group von Neumann algebras.} Trans.~ Amer.~ Math.~ Soc.~ {\bf 344} (1994), 667--699.
\bibitem{Izu} M.~ Izumi, {\it Inclusions of simple \Cs-algebras.} J.~ reine angew.~ Math.~ {\bf 547} (2002), 97--138.

\bibitem{Izu04}M.~ Izumi, {\it Finite group actions on \Cs-algebras with the Rohlin property I.} Duke Math.~ J.~ {\bf 122} (2004), no.~2, 233--280.
\bibitem{IM}M.~Izumi, H.~Matui, {\it Poly-$\mathbb{Z}$ group actions on Kirchberg algebras I.}
To appear in Int. Math. Res. Not., arXiv:1810.05850.
\bibitem{IM2}M.~Izumi, H.~Matui, {\it Poly-$\mathbb{Z}$ group actions on Kirchberg algebras II.}
Preprint, arXiv:1906.03818
\bibitem{Kas}G.~ Kasparov, {\it Equivariant KK-theory and the Novikov conjecture.} Invent.~ Math., {\bf 91} (1988) (1), 147--201. 
\bibitem{Kir94} E.~ Kirchberg, {\it Commutants of unitaries in UHF-algebras and functorial properties of exactness.} J.~ reine angew.~ Math. {\bf 452} (1994), 39--77.
\bibitem{Kir94b} E.~ Kirchberg, {\it The classification of purely infinite \Cs-algebras using Kasparov's theory.} Preprint.
\bibitem{KP}E.~ Kirchberg, N.~ C.~ Phillips, {\it Embedding of exact \Cs-algebras in the Cuntz algebra $\mathcal{O}_2$.} J.~ reine angew.~ Math.~ {\bf 525} (2000), 17--53.
\bibitem{Kis}A.~Kishimoto, {\it Outer automorphisms and reduced crossed products of simple \Cs-algebras.} Comm.~ Math.~ Phys.~ {\bf 81} (1981), no. 3, 429--435.
\bibitem{KOS} A.~ Kishimoto, N.~ Ozawa, S.~ Sakai, {\it Homogeneity of the pure state space of a separable \Cs-algebra.} Canad.~ Math.~ Bull.~ {\bf 46} (2003), 365--372.
\bibitem{LS}V.~ Lafforgue, M.~ de la Salle, {\it Noncommutative $L^p$-spaces without the completely bounded approximation property.} Duke Math.~ J.~ {\bf 160} (2011), 71--116.
\bibitem{Nak}H.~ Nakamura, {\it Aperiodic automorphisms of nuclear purely infinite simple \Cs-algebras.} Ergodic Theory Dynam.~ Systems, {\bf 20} (2000), 1749--1765.
\bibitem{Neu} B.~ H.~ Neumann, {\it Groups covered by permutable subsets.} J.~ London Math.~ Soc.~ {\bf 29} (1954), 236--248. 
\bibitem{OP} D.~ Olesen, G.~ K.~ Pedersen, {\it Applications of the Connes Spectrum to \Cs-dynamical Systems III.} J.~ Funct.~ Anal.~ {\bf 45} (1981), no. 3, 357--390.
\bibitem{Oz04}N.~Ozawa, {\it There is no separable universal II$_1$-factor.}
Proc.~ Amer.~ Math.~ Soc., {\bf 132} (2004), 487--490.
\bibitem{Ozh}N.~Ozawa, {\it Weak amenability of hyperbolic groups.}
Groups Geom.~ Dyn., {\bf 2} (2008), 271--280.
\bibitem{Oz}N.~Ozawa, {\it Examples of groups which are not weakly amenable.}
Kyoto J.~ Math., {\bf 52} (2012), 333--344.
\bibitem{Phi}N.~ C.~ Phillips, {\it A classification theorem for nuclear purely infinite simple \Cs-algebras.} Doc.~ Math.~ {\bf 5} (2000), 49--114.
\bibitem{PV}M.~ Pimsner, D.~ Voiculescu, {\it K-groups of reduced crossed products by free groups.} J.~ Operator Theory {\bf 8} (1982), 131--156.
\bibitem{Ror}M.~ R{\o}rdam, {\it Classification of nuclear, simple \Cs-algebras.} vol.~126 of Encyclopaedia Math.~ Sci., Springer, Berlin, 2002, 1--145.
\bibitem{RS}J.~ Rosenberg, C.~ Schochet, {\it The K\"{u}nneth theorem and the universal coefficient theorem for Kasparov's generalized K-functor.} Duke Math.~ J.~ {\bf 55} (1987), no.~ 2, 431--474.
\bibitem{Suz17}Y.~Suzuki, {\it Group \Cs-algebras as decreasing intersection of nuclear \Cs-algebras.}
Amer.~ J.~ Math.~ {\bf 139} (2017), no.~3, 681--705.
\bibitem{Suzeq}Y.~Suzuki, {\it Simple equivariant \Cs-algebras whose full and reduced crossed products coincide.}
To appear in J.~ Noncommut.~ Geom., arXiv:1801.06949.
\bibitem{Suz19}Y.~Suzuki, {\it Complete descriptions of intermediate operator algebras by intermediate extensions of dynamical systems.}
To appear in Comm.~Math.~Phys., arXiv:1805.02077.
\bibitem{Suz19b} Y.~Suzuki, {\it Rigid sides of approximately finite dimensional simple operator algebras in non-separable category.}
To appear in Int. Math. Res. Not., arXiv:1809.08810.
\bibitem{Sz18}G.~ Szab\'o, {\it Equivariant Kirchberg-Phillips-type absorption for amenable group actions.} Comm.~ Math.~ Phys., {\bf 361} (2018), no. 3, 1115--1154
\bibitem{Wat}Y.~ Watatani, {\it Index for \Cs-subalgebras.} Mem.~ Amer.~ Math.~ Soc.~ {\bf 424} (1990).
\end{thebibliography}
\end{document}